\documentclass[letterpaper, 10 pt, conference]{ieeeconf}  % Comment this line out if you need a4paper

\IEEEoverridecommandlockouts                              % This command is only needed if 
\usepackage{graphics} % for pdf, bitmapped graphics files
\usepackage{epsfig} % for postscript graphics files
\usepackage{mathptmx} % assumes new font selection scheme installed
\DeclareMathAlphabet{\mathcal}{OMS}{cmsy}{m}{n}
\usepackage{amsmath} % assumes amsmath package installed
\usepackage{amssymb}  % assumes amsmath package installed

\usepackage[ruled,vlined,linesnumbered]{algorithm2e}
\SetAlgoLined

\usepackage{multirow}
\usepackage{subcaption}


\usepackage{amsthm}

\theoremstyle{plain}
\newtheorem{theorem}{Theorem}

\newtheorem{lemma}{Lemma}

\newtheorem{assumption}{Assumption}

\DeclareMathOperator{\Tr}{Tr}

\title{\LARGE \bf
Policy Iteration for Domain Randomized Linear Quadratic Systems
}

\author{Abbas Pasdar$^{1}$ and Farnaz Adib Yaghmaie$^{2}$% <-this % stops a space
\thanks{Abbas Pasdar is supported by Sensor Informatics and Decision-making for the Digital Transformation (SEDDIT). Farnaz Adib Yaghmaie is supported by the Excellence Center at Linköping–Lund in Information Technology (ELLIIT), ZENITH, and partially by Sensor Informatics and Decision-making for the Digital Transformation (SEDDIT).  This work was partly performed within the Competence Center SEDDIT, supported by Sweden’s Innovation Agency within the research and innovation program Advanced Digitalization.}% <-this % stops a space
\thanks{$^{1}$Department of Electrical Engineering, Link\"{o}ping University, Link\"{o}ping, Sweden
        {\tt\small abbas.pasdar@liu.se}}%
\thanks{$^{2}$Department of Electrical Engineering, Link\"{o}ping University, Link\"{o}ping, Sweden {\tt\small farnaz.adib.yaghmaie@liu.se}}%
}

\begin{document}

\maketitle
\thispagestyle{empty}
\pagestyle{empty}

%%%%%%%%%%%%%%%%%%%%%%%%%%%%%%%%%%%%%%%%%%%%%%%%%%%%%%%%%%%%%%%%%%%%%%%%%%%%%%%%

\begin{abstract}
In this work, we study policy optimization under domain randomization for linear quadratic control, focusing on learning a single state-feedback controller that minimizes the average cost across systems with uncertain dynamics.  We propose a policy iteration algorithm with a step-size rule that preserves stability across all sampled systems at each iteration.  We show that the method yields monotonic improvement of the sample-average objective and that a stabilizing step size always exists. Under standard smoothness assumptions, the iterates converge subsequentially to stationary points, and under a gradient-dominance condition, we obtain a global linear convergence rate.
\end{abstract}

% \begin{keywords}%
%   Semidefinite programming, linear quadratic regulation, domain randomization
% \end{keywords}
%%%%%%%%%%%%%%%%%%%%%%%%%%%%%%%%%%%%%%%%%%%%%%%%%%%%%%%%%%%%%%%%%%%%%%%%%%%%%%%%

\section{Introduction}\label{sec_intro}
One of the main challenges in Reinforcement Learning (RL) and control is to learn policies that generalize across environments with uncertain dynamics \cite{dulac2019challenges,pinto2017robust}. This issue is particularly prominent in robotics and sim-to-real transfer, where discrepancies between simulated and real-world dynamics can lead to significant performance degradation \cite{tobin2017domain,koos2013transferability}. 
A widely used approach to mitigate this problem is \emph{Domain Randomization} (DR), in which policies are trained across a distribution of environments obtained by varying system parameters \cite{openai2019rubiks}. This is typically done in simulation, where it is possible to efficiently sample from a wide range of dynamics and generate large amounts of data, but this approach generally lacks theoretical guarantees.

On the other hand, in control theory, robust control methods have been developed to design controllers that maintain stability and performance under model uncertainty \cite{zhou1998essentials}. Robust control typically focuses on worst-case performance guarantees, which can be conservative in practice. In contrast, domain randomization aims to optimize average performance across a distribution of systems; however, it lacks the rigorous stability and convergence guarantees that are central to control.

Linear quadratic regulation (LQR) offers a natural and analytically tractable setting for investigating generalization and robustness in control. Beyond its classical role in optimal control, LQR has also emerged as a canonical benchmark for studying generalization and robustness in control, owing to its rich structure and amenability to theoretical analysis \cite{tu2019gap,Adib2021Linear,Adib2018Output}. Recently, DR problems have been studied in the LQR setting, where the aim is to learn a single linear state-feedback controller that minimizes the average quadratic cost across a distribution of linear systems with uncertain dynamics. In \cite{fujinami2025pg}, Policy Gradient (PG) methods were used in this setting, providing initial insights into robustness and generalization across system variations. In \cite{AbbasECC2026}, semidefinite programming (SDP) approaches have been proposed to solve the DR-LQR problem with the main benefit of handling additional constraints, such as stability-guarantee constraints, at each iteration. However, the computational cost of solving SDP problems grows substantially with the system dimension, which can make SDP-based approaches computationally prohibitive for high-dimensional systems.

Policy gradient approaches in the LQR setting require either the model of dynamics or long trajectories of system rollouts to estimate the policy gradient \cite{fazel2018global}. In contrast, Policy Iteration (PI) methods exploit the Bellman operator to learn the optimal policy without relying on long rollouts and naturally extend to data-driven setups \cite{bertsekas2012dynamic,lewis2012optimal}. In practice, PI methods are known to converge faster than PG methods, and they can be interpreted as a second-order optimization method \cite{fazel2018global}. In addition, PI methods build the backbone of more modern actor-critic algorithms, which have been successfully applied to a wide range of control problems \cite{lillicrap2015continuous, schulman2015trust, mnih2016asynchronous}. 

Policy iteration in the LQR setting has been widely studied in the literature, including robustness analysis \cite{pang2021robust}, mean-field control problems \cite{li2024policy}, stochastic optimal control \cite{li2022stochastic}, model-free and data-driven implementations, and indirect/direct learning architectures \cite{song2024role,forni2023onpolicy,yang2023lambda}. These developments, however, largely concern the \emph{single-system} setting and do not address the domain-randomized setting considered here.

In this paper, we study the domain-randomized LQR problem through the lens of policy iteration. We derive policy evaluation and policy improvement steps for the sample-average objective and extend the classical policy iteration framework to the multi-system setting.
In particular, we use the Lagrange multiplier method to formulate the constrained optimization problem arising from the DR‑LQR formulation, and we show that this leads to a PI representation of the DR‑LQR problem.
Under moderate heterogeneity, we show that a stabilizing step size always exists and that the resulting updates yield a monotonic decrease of the sample-average objective. Furthermore, under standard smoothness assumptions, we establish subsequential convergence to stationary points, and under a gradient-dominance condition, we obtain a linear convergence rate. Overall, the main contribution of this paper is the development of a policy iteration algorithm for domain-randomized LQR together with theoretical analysis on stability preservation, descent, and convergence.

The remainder of the paper is organized as follows. 
In Section~\ref{sec_background}, we present the necessary preliminaries. 
Section~\ref{Sec_OPPI} derives the policy iteration scheme from an optimization perspective. 
In Section~\ref{sec_DRPI}, we present the proposed policy iteration algorithm for domain-randomized LQR and discuss its implementation. 
Theoretical analysis, including stability preservation, descent, and convergence results, are established in Section~\ref{sec_Theory}. 
Numerical experiments illustrating the performance of the proposed method are provided in Section~\ref{sec_experiment}. 
Finally, Section~\ref{sec_conclusion} concludes the paper.

%================================================================
%================================================================
%================================================================

\section{Preliminaries}
\label{sec_background}
\textbf{Notations:}
Let $\mathbb{R}^{m \times n}$ denote the set of real matrices of size $m \times n$. 
For a matrix $A$, $\|A\|_F$ and $\|A\|$ denote the Frobenius and spectral norms, respectively, and $A^\top$ denotes transpose. 
The Frobenius inner product is denoted by $\langle \cdot, \cdot \rangle$. 
The spectral radius of $A$ is defined as $\rho(A) := \max \{ |\lambda| : \lambda \in \mathrm{spec}(A) \}$. 
The vectorization $\mathrm{vec}(A)$ stacks the columns of $A \in \mathbb{R}^{m \times n}$ into a vector in $\mathbb{R}^{mn}$. 
For a symmetric matrix $P \in \mathbb{R}^{n \times n}$, $P \succ 0$ ($P \succeq 0$) denotes that $P$ is positive definite (semidefinite). 
The Kronecker product is denoted by $\otimes$. 
For a positive integer $M$, we use $[M] := \{1, \dots, M\}$.
The support of a probability distribution $\mathcal D$ is denoted as $\text{supp}(\mathcal D)$.

\subsection{Linear Quadratic Regulator (LQR)}
We consider the discrete-time linear system
\[x_{t+1} = A x_t + B u_t + w_t,\]
where $x_t \in \mathbb{R}^n$ is the state, $u_t \in \mathbb{R}^m$ is the control input (action), and $w_t \in \mathbb{R}^n$ is the process noise. 
The stage cost is quadratic
\[c(x_t,u_t) = x_t^\top Q x_t + u_t^\top R u_t,\]
where $Q \succeq 0$ and $R \succ 0$.
Under standard stabilizability and detectability assumptions, the optimal policy is linear $u_t = K_\star x_t$ and can be obtained from the discrete-time algebraic Riccati equation \cite{anderson2007optimal}.
\subsection{Dynamic Programming for LQR}

A fundamental dynamic programming method for solving the infinite-horizon
discrete-time LQR problem is policy iteration, which alternates between
policy evaluation and policy improvement \cite{anderson2007optimal}.

Given a stabilizing controller $K_i$ at iteration $i$, the value matrix $P_i$ is obtained as the unique positive semidefinite solution of the Lyapunov equation
\begin{equation}
P_i = Q + K_i^\top R K_i + (A + B K_i)^\top P_i (A + B K_i),
\end{equation}
which is called the \emph{policy evaluation} step. The \emph{policy improvement} step updates the controller according to
\begin{equation} \label{eq_PolicyImprove_single}
K_{i+1} = -(R + B^\top P_i B)^{-1} B^\top P_i A.
\end{equation}
Under standard stabilizability and detectability assumptions, this iteration
is well-defined and converges to the optimal controller $K_\star$
\cite{bertsekas2012dynamic,hewer1971iterative,gravell2021midpoint}.

\subsection{Optimization-Based LQR}

Beyond dynamic programming, the LQR problem can be viewed as a nonconvex optimization problem over the linear feedback gain $K$.  
For a stabilizing controller $K$, the infinite-horizon cost can be written as \cite{AbbasECC2026}
\begin{equation} \label{eq_LQRPO}
    \begin{aligned}
        \min_{K\in\mathcal{K},\, \Sigma}\quad &J(K) = \mathrm{Tr}\!\left((Q + K^\top R K)\Sigma\right),\\
        s.t.\quad & \Sigma = I + (A + B K) \Sigma (A + B K)^\top
    \end{aligned}
\end{equation}
where $\mathcal{K}$ denotes the stabilizing gain set defined as
\[
\mathcal{K} :=\{K\in\mathbb{R}^{m\times n} :\rho(A+BK)<1\}.
\]
The gradient of $J(K)$ admits a closed-form expression \cite{fazel2018global}:

\begin{align*} \label{eq_lqr_grad_update_single}
\nabla_K J(K)  &= 2\Big[(R + B^\top P B)K + B^\top P A\Big]\Sigma, \\
P &= Q + K^\top R K + (A + B K)^\top P (A + B K), \\
\Sigma &= I + (A + B K) \Sigma (A + B K)^\top.
\end{align*}

This formulation connects LQR to smooth nonconvex optimization and enables the use of gradient-based algorithms. Therefore, the Gradient Descent (GD) update is 
\[K_{i+1}  = K_i - \eta_i \nabla_K J(K_i),\]
where $\eta_i > 0$ is a stepsize. Define the sublevel set 
\[
\mathcal{K}_\gamma :=\{K\in\mathcal{K} :J(K)\leq \gamma\}.
\]
for some finite positive value $\gamma$. It has been proved that policy gradient steps starting from $K_0\in \mathcal{K}_\gamma$ remain inside $\mathcal{K}_\gamma$ and converge to the global optimum of LQR in $\mathcal{K}_\gamma$ at a linear rate \cite{hu2023toward}.

\subsection{Domain Randomization}
In many reinforcement learning and control applications, the system dynamics are uncertain. 
We model this uncertainty by a distribution $\mathcal{D}$ over system matrices $(A,B)$. 
For a controller $K$, the LQR cost corresponding to a realization $(A,B)$ is denoted by $J(K;A,B)$. 
Domain randomization seeks a single controller that minimizes the expected cost over this distribution 
\begin{equation} \label{eq_DRObjective}
J_{\mathrm{DR}}(K) = \mathbb{E}_{(A,B)\sim\mathcal{D}}[J(K;A,B)].
\end{equation}
The goal is therefore
\begin{equation} \label{eq_DRLQR}
K_{\mathrm{DR}} \in \arg\min_{K\in\mathcal{K}_{js}} J_{\mathrm{DR}}(K),
\end{equation}
where $\mathcal{K}_{js}$ denotes the set of jointly stabilizing gains
\[
\mathcal{K}_{js} := \{K\in\mathbb{R}^{m\times n} :
\rho(A+BK)<1,\ \forall (A,B)\in\mathrm{supp}(\mathcal D)\}.
\]

We refer to the problem \eqref{eq_DRLQR} as Domain-Randomized LQR (DR-LQR).
Throughout the paper, we consider the following assumption.
\begin{assumption}[Simultaneous stabilizability]
\label{assump_stabilizable}
The joint stabilizing gain set $\mathcal K_{js}$ is nonempty.
\end{assumption}

%This assumption ensures that the feasible set of \eqref{eq_DRLQR} is nonempty.
This assumption ensures that the feasible set of \eqref{eq_DRLQR} is nonempty. It can, in particular, be satisfied when the heterogeneity among the system dynamics is sufficiently small.

%================================================================
%================================================================
%================================================================

\section{Optimization-Derived Dynamic Programming for LQR} 
\label{Sec_OPPI}
In this section, we connect the dynamic-programming view of LQR with its optimization formulation.

The LQR objective $J(K)$ is gradient dominated over $\mathcal K_\gamma$  \cite{fazel2018global}, so that any stationary point is globally optimal. In particular, the optimal controller satisfies the first-order condition $\nabla_K J(K_\star)=0$ where $K_\star\in \mathcal K_\gamma$.

Now consider a stabilizing iterate $K_i$, and let $P_i$ and $\Sigma_i$ denote the corresponding solutions of the Lyapunov equations. Fixing $P_i$ and $\Sigma_i$, the first-order condition
\[
\nabla_K J(K \mid P_i,\Sigma_i)=0
\]
coincides with the classical policy-improvement equation \eqref{eq_PolicyImprove_single}. Since the resulting subproblem is convex in $K$, the next iterate can be written as
\[
K_{i+1} = \arg\min_K J(K \mid P_i,\Sigma_i).
\]
This yields the following optimization-based formulation, initialized from a stabilizing controller $K_0$:
\begin{align*} 
P_i &= Q + K_i^\top R K_i + (A + B K_i)^\top P_i (A + B K_i), \\
\Sigma_i &= I + (A + B K_i)\Sigma_i(A + B K_i)^\top, \\
K_{i+1} &= \arg\min_K J(K \mid P_i,\Sigma_i). \label{eq_PI_GD}
\end{align*}
That is, classical policy iteration can be interpreted as an alternating optimization scheme over the value and policy variables, closely related to block coordinate descent or alternating minimization methods \cite{bertsekas1999nonlinear,razaviyayn2013unified}.
This perspective will be useful for extending PI to the domain-randomized LQR setting in the next section.

%================================================================
%================================================================
%================================================================

\section{Dynamic Programming for Domain Randomized LQR} \label{sec_DRPI}

We now extend the policy iteration viewpoint developed in the previous section to the domain-randomized setting. 
\subsection{Policy Iteration for DR-LQR}
Domain randomization seeks a single controller that performs well across a distribution of systems $(A,B)\sim\mathcal D$. 
In practice, the distribution is approximated using a finite set of sampled systems. 

\paragraph{Sample-Average Approximation}
Assume that $M$ systems $\{(A_j,B_j)\}_{j=1}^M$ are sampled identically and independently from $\mathcal D$. 
The domain-randomized objective \eqref{eq_DRObjective} can then be approximated by the sample-average objective
\begin{equation}\label{eq_JSA}
    J_{SA}(K)=\frac{1}{M}\sum_{j=1}^M J(K;A_j,B_j).
\end{equation}
For a stabilizing controller $K$, the gradient of $J_{SA}(K)$ is obtained by averaging the gradients of the individual LQR objectives:
\begin{equation}
\begin{aligned}
\nabla_K J_{SA}(K)
  &= \frac{1}{M}\sum_{j=1}^M 
  2\Big[(R + B_j^\top P_j B_j)K + B_j^\top P_j A_j\Big]\Sigma_j, \\
P_j &= Q + K^\top R K + (A_j + B_j K)^\top P_j (A_j + B_j K),\\
\Sigma_j &= I + (A_j + B_j K) \Sigma_j (A_j + B_j K)^\top ,
\end{aligned}
\label{eq_lqr_grad_update}
\end{equation}
for all $j\in[M]$, where $P_j$ and $\Sigma_j$ are the Lyapunov solutions associated with the $j$-th system.

\paragraph{Policy Improvement for DR-LQR}
Following the optimization-based interpretation of policy iteration presented in Section~\ref{Sec_OPPI}, at iteration $i$, given $K_i$ and the matrices $\{P_{j,i},\Sigma_{j,i}\}_{j=1}^M$ obtained from the corresponding Lyapunov equations in \eqref{eq_lqr_grad_update}, we perform the policy improvement step by solving the first-order optimality condition
\[
\nabla_K J_{SA}\!\left(\widehat{K}_{i+1}\mid\{P_{j,i},\Sigma_{j,i}\}_{j=1}^M\right)=0.
\]
Substituting \eqref{eq_lqr_grad_update} yields the matrix equation
\[
\sum_{j=1}^M \Big[(R + B_j^\top P_{j,i} B_j)\widehat K_{i+1} + B_j^\top P_{j,i}A_j\Big]\Sigma_{j,i}=0.
\]
This equation generalizes the classical LQR policy-improvement rule to the multi-system setting. 
While the single-system case yields a closed-form Riccati update, the domain-randomized case couples the controllers across all sampled systems.

\paragraph{Closed-form solution}
The above equation can be written as a Sylvester-type linear matrix equation in $\widehat K_{i+1}$. 
Using vectorization, it admits the closed-form solution
\begin{equation}
\begin{aligned}
\mathrm{vec}\left(\widehat K_{i+1}\right) &=- \left( \sum_{j=1}^M \Sigma_{j,i}\otimes R_{j,i} \right)^{-1}
\mathrm{vec}\!\left( \sum_{j=1}^M \Gamma_{j,i} \right),
\end{aligned}
\label{eq_Generalupdate}
\end{equation}
where $R_{j,i}=R + B_j^\top P_{j,i} B_j$ and $\Gamma_{j,i}=B_j^\top P_{j,i} A_j \Sigma_{j,i}$. Since $R_{j,i}\succ0$ and $\Sigma_{j,i}\succ0$, the existence and uniqueness of the solution are guaranteed.
Equation \eqref{eq_Generalupdate} reveals that the policy update can be interpreted as a weighted aggregation of the individual policy-improvement directions across systems, where the weights depend on the state covariance matrices.

\paragraph{DR-LQR Policy Iteration Algorithm}
Combining the policy evaluation and policy improvement steps yields the policy iteration algorithm for domain-randomized LQR, shown in Algorithm~\ref{Alg_PI_DRLQR}. Starting from a stabilizing controller $K_0$, each iteration first evaluates the value and covariance matrices for all sampled systems. The construction of a jointly stabilizing initial controller is discussed in \cite{AbbasECC2026}. The policy is then updated using \eqref{eq_Generalupdate} together with an appropriate step size. Further details are provided in the next subsection.

\begin{algorithm}[t] \caption{Policy Iteration for DR-LQR}
\label{Alg_PI_DRLQR}
\KwIn{Sample systems $\{(A_j,B_j)\}_{j=1}^M$, stabilizing $K_0$, $Q\succeq 0$, $R\succ 0$, step size $\alpha$}
    \For{ \( i = 0, 1, \dots \) (until convergence)}{
        For all $j\in [M]$ solve:
        \(\:\Sigma_{j,i} = I+(A_j + B_j K_i) \Sigma_{j,i} (A_j + B_j K_i)^\top,\) and
        \(\:P_{j,i} = Q+K_i^\top R K_i+(A_j + B_j K_i)^\top P_{j,i} (A_j + B_j K_i)\)\;
        Compute $\widehat K_{i+1}$ using \eqref{eq_Generalupdate} and set $\Delta_i = \widehat K_{i+1}-K_i$\;
        Update $K_{i+1}=K_i+\alpha\Delta_i$\;
    }
\KwOut{$K_i$}
\end{algorithm}

\subsection{KKT Derivation of the Policy-Iteration}
\label{sec_kkt_dr_lqr}

In this subsection, we show that the policy-improvement step used in our domain-randomized LQR algorithm can be derived from the first-order optimality conditions of a constrained sample-average problem.

\subsubsection{Sample-average DR-LQR formulation}

For a shared linear state-feedback controller $u_t=Kx_t$, the sample-average DR-LQR problem can be written as \cite{AbbasECC2026}
\begin{equation}
\label{eq_app_primal_problem}
\begin{aligned}
\min_{K,\{\Sigma_j\}_{j=1}^M}\quad
& \frac1M\sum_{j=1}^M \Tr\!\big((Q+K^\top R K)\Sigma_j\big)\\
\text{s.t.}\quad
& \Sigma_j = I+(A_j+B_jK)\Sigma_j(A_j+B_jK)^\top,\\&\forall j\in[M].
\end{aligned}
\end{equation}

\subsubsection{Lagrangian}

Introduce symmetric Lagrange multipliers $P_j=P_j^\top$ for the Lyapunov
constraints \eqref{eq_app_primal_problem}. The Lagrangian $\mathcal L(K,\{\Sigma_j, P_j\}_{j=1}^M)$ is
\[
\mathcal L = \frac{1}{M}\sum_{j=1}^M
\Tr\!\Big(\bar{Q}\Sigma_j+P_j\big(I+\bar{A}_j\Sigma_j\bar{A}_j^\top-\Sigma_j\big)\Big),
\]
where $\bar{Q}=Q+K^\top RK$ and $\bar{A}_j=A_j+B_jK$. Using the cyclic property of the trace, the Lagrangian can be written as
\begin{equation}
\label{eq_app_lagrangian_compact}
\mathcal L = \frac1M\sum_{j=1}^M \left[ \Tr(P_j) + \Tr\!\Big(
\bar{Q}+\bar{A}_j^\top P_j\bar{A}_j-P_j
\Big)\Sigma_j
\right].
\end{equation}

\subsubsection{KKT conditions}

We now compute the first-order optimality conditions.

\paragraph{Stationarity with respect to $\Sigma_j$}
Differentiating \eqref{eq_app_lagrangian_compact} with respect to $\Sigma_j$
gives
\[
\nabla_{\Sigma_j}\mathcal L=  Q+K^\top R K+(A_j+B_jK)^\top P_j(A_j+B_jK)-P_j=0.
\]
Therefore, $\forall j\in[M]$,
\begin{equation}
\label{eq_app_p_lyap}
P_j = Q+K^\top R K+(A_j+B_jK)^\top P_j(A_j+B_jK).
\end{equation}
\paragraph{Stationarity with respect to $K$}
\begin{equation}
\label{eq_app_grad_stationarity_raw}
\nabla_K \mathcal L = \frac{2}{M}\sum_{j=1}^M \Big[ RK\Sigma_j+B_j^\top P_j(A_j+B_jK)\Sigma_j \Big] =0.
\end{equation}

\paragraph{Primal feasibility}
The primal constraints are exactly
\begin{equation}
\label{eq_app_primal_feasibility}
\Sigma_j = I+(A_j+B_jK)\Sigma_j(A_j+B_jK)^\top,
\quad \forall j\in[M].
\end{equation}

\subsubsection{From KKT conditions to PI}

At iteration $i$, let $K_i$ be a stabilizing controller.
For each sample $j$, define $P_{j,i}$ and $\Sigma_{j,i}$ as the unique solutions of \eqref{eq_app_p_lyap} and \eqref{eq_app_primal_feasibility}, respectively.
These matrices correspond to the policy-evaluation step for the current controller $K_i$. If one substitutes $(P_{j,i},\Sigma_{j,i})$ into the stationarity condition \eqref{eq_app_grad_stationarity_raw} and solves for $K$, one obtains $\widehat K_{i+1}$ defined in \eqref{eq_Generalupdate} that is the policy-improvement step used in the algorithm.

\subsubsection{Interpretation as a Newton-like policy update}

The exact KKT system couples $K$, $\{P_j\}$, and $\{\Sigma_j\}$ through the nonlinear Lyapunov equations. Solving it directly would require tackling the full nonconvex sample-average problem. Our policy-iteration method instead proceeds by alternating between:
\begin{enumerate}
    \item \textbf{Policy evaluation:} compute $(P_{j,i},\Sigma_{j,i})$ from
    \eqref{eq_app_p_lyap} and \eqref{eq_app_primal_feasibility} for the current $K_i$;
    \item \textbf{Policy improvement:} compute $\widehat K_{i+1}$ from the
    linearized KKT condition \eqref{eq_app_grad_stationarity_raw}.
\end{enumerate}
Thus, the update $\widehat K_{i+1}$ may be viewed as the controller that satisfies the stationarity condition of the Lagrangian while keeping the evaluation quantities $(P_{j,i},\Sigma_{j,i})$ fixed at the current iterate.

Finally, to preserve joint stability across all sampled systems, we do not replace $K_i$ by $\widehat K_{i+1}$ directly. Instead, we take a damped step
\[
K_{i+1} = K_i+\alpha(\widehat K_{i+1}-K_i), \qquad \alpha\in(0,\alpha_{\max} ],
\]
where $\alpha$ is selected so that every closed-loop matrix $A_j+B_jK_{i+1}$ remains Schur stable and the sample-average
cost decreases sufficiently (see Section \ref{sec_Theory}).

The derivation above shows that the policy-improvement equation used in our algorithm is not ad hoc. It arises directly from the KKT stationarity condition
of the constrained sample-average DR-LQR problem, after freezing the policy-evaluation quantities associated with the current stabilizing controller.

%================================================================
%================================================================
%================================================================
\section{Theoretical Analysis} \label{sec_Theory}

We analyze the proposed policy iteration algorithm along three dimensions:
(i) preservation of closed-loop stability during the iterations,
(ii) descent of the sample-average objective, and
(iii) subsequential convergence.
Under an additional gradient-dominance condition, we further obtain a linear convergence rate.

Throughout the analysis we consider the sample-average objective \eqref{eq_JSA} and the level set
\[
\mathcal K_{js,\gamma} := \{K\in\mathcal K_{js} : J_{SA}(K)\le\gamma\}.
\]
Let $K_i\in\mathcal K_{js,\gamma}$ and the policy-improvement direction be $\Delta_i := \widehat K_{i+1}-K_i$, where $\widehat K_{i+1}$ solves the policy-improvement equation derived in \eqref{eq_Generalupdate}.  
The next iterate is obtained by $K_{i+1}=K_i+\alpha\Delta_i$, where $\alpha>0$ is the step size.

Recent work suggests that under moderate system heterogeneity, the sample-average LQR objective exhibits favorable geometric properties, including smoothness and gradient dominance \cite{fujinami2025pg}. Motivated by this, we impose the following assumptions.

\begin{assumption}[Smoothness]
\label{assump_smooth}
$\mathcal K_{js,\gamma}$ is compact and the sample-average objective $J_{SA}$ is continuously differentiable
and $L$-smooth on $\mathcal K_{js,\gamma}$.
\end{assumption}

\begin{assumption}[Gradient dominance]
\label{assump_GD_main}
There exists $\mu>0$ and a minimizer $K_\star\in\mathcal K_{js,\gamma}$
such that
\[
J_{SA}(K)-J_{SA}(K_\star) \le \frac{1}{2\mu}\|\nabla J_{SA}(K)\|_F^2, \quad \forall K\in\mathcal K_{js,\gamma}.
\]
\end{assumption}

%%%%%%%%%%%%%%%%%%%%%%%%%%%%%%%%%%%%%%%%%%%%%%%%%%%%%%%%%%%%
\subsection{Closed-Loop Stability Preservation}

We begin by establishing that the algorithm operates entirely within the stabilizing region. This property is fundamental; without it, the objective $J_{SA}(K)$ may become unbounded or undefined, and subsequent descent and convergence would no longer hold.

\begin{theorem}[Stability Preservation]
\label{thm_stability_main}
Suppose $K_i$ jointly stabilizes all sampled systems.  
Then there exists an $\bar{\alpha}_i>0$ such that $K_i+\alpha\Delta_i$ remains jointly stabilizing for all $\alpha\in[0,\bar{\alpha}_i]$.  
\end{theorem}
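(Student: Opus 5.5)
The plan is to argue by continuity of the spectral radius, together with the finiteness of the sample set. Since $K_i$ jointly stabilizes all sampled systems, the Lyapunov equations for $P_{j,i}$ and $\Sigma_{j,i}$ have unique positive definite solutions; in particular $\Sigma_{j,i}\succeq I\succ 0$ and $R_{j,i}\succ 0$. Hence $\sum_j \Sigma_{j,i}\otimes R_{j,i}\succ 0$, so by \eqref{eq_Generalupdate} the improved gain $\widehat K_{i+1}$ exists and is unique. The direction $\Delta_i=\widehat K_{i+1}-K_i$ is therefore a fixed, finite matrix. If $\Delta_i=0$ the claim is trivial for any $\bar\alpha_i>0$, so assume $\Delta_i\neq 0$.

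For each $j\in[M]$, define the affine matrix path $\alpha\mapsto A_j+B_j(K_i+\alpha\Delta_i)=A_j+B_jK_i+\alpha B_j\Delta_i$. We have $\rho(A_j+B_jK_i)<1$, and the spectral radius is a continuous function of the matrix entries, since eigenvalues depend continuously on the matrix. Hence for each $j$ there is $\bar\alpha_{j}>0$ with $\rho(A_j+B_j(K_i+\alpha\Delta_i))<1$ for all $\alpha\in[0,\bar\alpha_j]$. Setting $\bar\alpha_i:=\min_{j\in[M]}\bar\alpha_j>0$ gives the claim; the minimum is positive because there are finitely many samples.

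To make the bound explicit rather than purely existential, which will be useful for the step-size rule later, I would use a Lyapunov perturbation argument instead of bare continuity. Take $\Sigma_{j,i}$, or equivalently $P_{j,i}$, as a Lyapunov certificate. With $\bar A_j=A_j+B_jK_i$ and $E_j=B_j\Delta_i$, the aim is to show that
\[(\bar A_j+\alpha E_j)^\top P_{j,i}(\bar A_j+\alpha E_j)-P_{j,i}\prec 0.\]
Expanding the left side and using $\bar A_j^\top P_{j,i}\bar A_j-P_{j,i}=-(Q+K_i^\top RK_i)$ gives
\[-(Q+K_i^\top RK_i)+2\alpha\,\mathrm{sym}(\bar A_j^\top P_{j,i}E_j)+\alpha^2E_j^\top P_{j,i}E_j.\]
Since $Q$ may be only semidefinite, I would use the certificate $\Sigma_{j,i}$ instead. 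Its equation $\bar A_j\Sigma_{j,i}\bar A_j^\top-\Sigma_{j,i}=-I$ has a strictly negative right-hand side, so the perturbed inequality holds whenever
\[2\alpha\|\bar A_j\|\|\Sigma_{j,i}\|\|E_j\|+\alpha^2\|\Sigma_{j,i}\|\|E_j\|^2<1.\]
This yields an explicit $\bar\alpha_j$ as the positive root of the corresponding quadratic.

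The main subtlety is ensuring that the Lyapunov certificate is strict despite $Q\succeq 0$, and this is resolved by using $\Sigma_{j,i}$, whose constant term is $I$. The rest is a routine norm bound followed by the minimum over the finitely many $j$.
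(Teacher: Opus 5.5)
Your proof is correct. Its core is exactly the paper's argument: continuity of the spectral radius along the affine path $\alpha\mapsto A_j+B_jK_i+\alpha B_j\Delta_i$, followed by a minimum over the finitely many samples. You also justify that $\Delta_i$ is well defined, which the paper takes for granted. One small correction there: when $Q\succeq 0$, $P_{j,i}$ is in general only positive semidefinite, but this still gives $R_{j,i}\succ 0$.

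Your Lyapunov-certificate refinement is a genuinely different route, and it is also valid. With $F=\bar A_j+\alpha E_j$ one has $F\Sigma_{j,i}F^\top-\Sigma_{j,i}=-I+\alpha(\bar A_j\Sigma_{j,i}E_j^\top+E_j\Sigma_{j,i}\bar A_j^\top)+\alpha^2E_j\Sigma_{j,i}E_j^\top\prec 0$ under your norm condition. Then $F\Sigma_{j,i}F^\top\prec\Sigma_{j,i}$ with $\Sigma_{j,i}\succ 0$ forces $\rho(F)<1$: test it on an eigenvector $v$ of $F^\top$ to get $|\lambda|^2 v^*\Sigma_{j,i}v<v^*\Sigma_{j,i}v$. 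Two small fixes are needed:
\begin{itemize}
\item The inequality to verify is this $F\Sigma F^\top$ form, not the $F^\top P F$ form you first wrote.
\item $\bar\alpha_j$ must be taken strictly below the positive root. At the root the perturbation has norm exactly one, so strictness is lost.
\end{itemize}

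Your route buys an explicit, computable step size that depends only on $\|A_j+B_jK_i\|$, $\|\Sigma_{j,i}\|$ and $\|B_j\Delta_i\|$. These quantities are uniformly bounded on the compact sublevel set: the paper itself bounds $\|\Sigma_{j,i}\|\le\bar\sigma$ in the proof of Theorem~\ref{thm_subseq_main}, and $\sup_i\|\Delta_i\|_F<\infty$ is assumed. So your bound also yields a constructive version of Lemma~\ref{lem_uniform_margin}. The paper's open-set/continuity argument is shorter but purely existential.
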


\begin{proof}[\textbf{Proof}]
For each $j\in[M]$, define the closed-loop matrix along the update direction
\[
F_{j,i}(\alpha):=A_j + B_j (K_i + \alpha \Delta_i), \qquad \alpha\in\mathbb{R}.
\]
By assumption, $K_i$ is jointly stabilizing, hence
\[
\rho(F_{j,i}(0)) = \rho(A_j + B_j K_i) < 1, \qquad \forall j\in[M].
\]

We first note that the set of Schur-stable matrices
\[
\mathbb S := \{F\in\mathbb{R}^{n\times n} : \rho(F) < 1\}
\]
is open. This follows since the spectral radius $\rho(F)$ is continuous in $F$, and $\mathbb S = \rho^{-1}((-\infty,1))$, where $(-\infty,1)$ is open. Since $F_{j,i}(\alpha)$ depends continuously on $\alpha$, for each fixed $j$ there exists $\bar\alpha_{j,i} > 0$ such that
\[
\rho(F_{j,i}(\alpha)) < 1, \qquad \forall \alpha \in [0,\bar\alpha_{j,i}].
\]

Finally, since the number of systems $M$ is finite, define
\[
\bar{\alpha}_i := \min_{j\in[M]} \bar\alpha_{j,i} > 0.
\]
Then for all $j\in[M]$ and all $\alpha\in[0,\bar{\alpha}_i]$, we have
$\rho(F_{j,i}(\alpha)) < 1$,
which implies that $K_i + \alpha \Delta_i$ jointly stabilizes all systems.  
\end{proof}

While Theorem~\ref{thm_stability_main} establishes the existence of a stabilizing step size at each iteration $i$, a uniform lower bound is required to prevent the step sizes from vanishing asymptotically. Fortunately, this uniform bound arises naturally from the compactness of our sublevel set.

% \begin{lemma}[Uniform Stability Margin]
% \label{lem_uniform_margin}
% Under Assumptions~\ref{assump_stabilizable} and \ref{assump_smooth}, there exists a uniform lower bound $\bar{\alpha} > 0$ such that $\bar{\alpha}_i \ge \bar{\alpha}$ for all $i \ge 0$.
% \end{lemma}
% \begin{proof}
% By Assumption~\ref{assump_smooth}, the sublevel set $\mathcal{K}_{js,\gamma}$ is compact. For any $K \in \mathcal{K}_{js,\gamma}$, the policy evaluation matrices $P_j(K)$ and $\Sigma_j(K)$ are continuous with respect to $K$. Consequently, the policy improvement direction $\Delta(K) = \widehat{K} - K$ is also a continuous mapping on $\mathcal{K}_{js,\gamma}$. 

% For each $K \in \mathcal{K}_{js,\gamma}$ and $j \in [M]$, define the maximum stabilizing step size:
% $$ \bar{\alpha}_{j}(K) := \sup \{ \alpha \ge 0 \mid \rho(A_j + B_j(K + \alpha \Delta(K))) < 1 \} $$
% Because $K$ strictly stabilizes each system, $\bar{\alpha}_{j}(K) > 0$. The spectral radius $\rho$ is a continuous function of its matrix arguments, and $\Delta(K)$ is bounded on the compact set $\mathcal{K}_{js,\gamma}$. Thus, the map $K \mapsto \bar{\alpha}_{j}(K)$ attains a strictly positive minimum over $\mathcal{K}_{js,\gamma}$. 

% Letting $\bar{\alpha} = \min_{j \in [M]} \min_{K \in \mathcal{K}_{js,\gamma}} \bar{\alpha}_{j}(K) > 0$, we guarantee that $\bar{\alpha}_i \ge \bar{\alpha} > 0$ for all iterations $i$, since the sequence $\{K_i\}$ never leaves $\mathcal{K}_{js,\gamma}$.
% \end{proof}

\begin{lemma}[Uniform Stability Margin]
\label{lem_uniform_margin}
Suppose the iterates $\{K_i\}$ lie in a compact subset
$\mathcal K_{js,\gamma} \subset\mathcal K_{js}$, where
\[
\mathcal K_{js} :=\{K:\rho(A_j+B_jK)<1,\ \forall j\in[M]\},
\]
and the update directions satisfy
$L:=\sup_{i\ge0}\|\Delta_i\|_F<\infty$.
Then there exists $\bar\alpha>0$ such that
$K_i+\alpha\Delta_i\in\mathcal K_{js}$ for every
$i\ge0$ and $\alpha\in[0,\bar\alpha]$.
\end{lemma}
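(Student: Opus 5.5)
The plan is a compactness argument: the compact set $\mathcal K_{js,\gamma}$ sits at a positive distance from the boundary of the open set $\mathcal K_{js}$, and bounded steps $\alpha\Delta_i$ cannot cross that gap.

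\textbf{Openness of $\mathcal K_{js}$.} For each $j$, the map $K\mapsto A_j+B_jK$ is continuous. The Schur-stable set $\mathbb S$ is open, as shown in the proof of Theorem~\ref{thm_stability_main}. Hence each set $\{K:\rho(A_j+B_jK)<1\}$ is open. Since $M$ is finite, their intersection $\mathcal K_{js}$ is open, and its complement $\mathcal K_{js}^c$ is closed.

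\textbf{Positive distance to the complement.} Write $\mathcal C:=\mathcal K_{js,\gamma}$ and define
\[
\delta := \inf\{\|K-K'\|_F : K\in\mathcal C,\ K'\notin\mathcal K_{js}\}.
\]
If $\mathcal K_{js}^c$ is empty, any $\bar\alpha>0$ works, so assume it is nonempty. The function $d(K):=\mathrm{dist}(K,\mathcal K_{js}^c)$ is $1$-Lipschitz, hence continuous. It is strictly positive on $\mathcal C$: $\mathcal K_{js}$ is open, so every $K\in\mathcal C$ has a ball around it contained in $\mathcal K_{js}$. By compactness of $\mathcal C$, $d$ attains a minimum on $\mathcal C$, so $\delta=\min_{K\in\mathcal C} d(K)>0$. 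This is the only step with real content: it needs $\mathcal C$ compact and contained in the open set $\mathcal K_{js}$, which is exactly the hypothesis.

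\textbf{Choice of $\bar\alpha$.} Let $L=\sup_i\|\Delta_i\|_F$. If $L=0$, every iterate is fixed and any $\bar\alpha$ works. Otherwise set $\bar\alpha:=\delta/(2L)>0$. Then for every $i$ and every $\alpha\in[0,\bar\alpha]$,
\[
\|(K_i+\alpha\Delta_i)-K_i\|_F\le \alpha L\le \delta/2<\delta .
\]
Since $K_i\in\mathcal C$, the point $K_i+\alpha\Delta_i$ is closer to $K_i$ than $\delta\le d(K_i)$, so it cannot lie in $\mathcal K_{js}^c$. Therefore $K_i+\alpha\Delta_i\in\mathcal K_{js}$, and $\bar\alpha$ does not depend on $i$.

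The statement only guarantees that $K_i+\alpha\Delta_i$ lies in $\mathcal K_{js}$, not in $\mathcal C$. That suffices here, because the lemma assumes the iterates stay in $\mathcal C$. Keeping later iterates in the sublevel set would be the job of the descent argument, not of this lemma.
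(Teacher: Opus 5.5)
Your proof is correct and follows the paper's argument. Openness of $\mathcal K_{js}$ together with compactness of $\mathcal K_{js,\gamma}$ gives a uniform radius $r>0$ (your $\delta$) around the iterates, and any step of norm at most $\alpha L$ below that radius stays in $\mathcal K_{js}$; the only cosmetic difference is that the paper takes $\bar\alpha = r/(1+L)$, which avoids your separate treatment of the case $L=0$.
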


\begin{proof}
Continuity of the spectral radius and finiteness of $M$
imply that $\mathcal K_{js}$ is open. Compactness of
$\mathcal K_{js,\gamma} \subset\mathcal K_{js}$ therefore guarantees
$r>0$ such that $K+E\in\mathcal K_{js}$ whenever
$K\in\mathcal K_{js,\gamma}$ and $\|E\|_F<r$.
Choose $\bar\alpha:=r/(1+L)>0$. For every $i\ge0$
and $\alpha\in[0,\bar\alpha]$,
\[
\|\alpha\Delta_i\|_F
\le \bar\alpha L
=\frac{rL}{1+L}<r.
\]
Hence $K_i+\alpha\Delta_i\in\mathcal K_{js}$.
\end{proof}
%%%%%%%%%%%%%%%%%%%%%%%%%%%%%%%%%%%%%%%%%%%%%%%%%%%%%%%%%%%%
\subsection{Descent Property}
Having guaranteed that the iterates remain stabilizing, we next analyze the optimization aspect of the algorithm. In particular, we show that the policy-improvement step defines a descent direction for the objective, which, together with an appropriate step size, leads to monotonic decrease of the sample-average cost.

\begin{theorem}[Descent Direction]
\label{thm_descent_main}
If $\Delta_i\neq0$, then there exists a constant $c_d>0$ such that
$\langle \nabla J_{SA}(K_i),\Delta_i\rangle \le -c_d\|\Delta_i\|_F^2$ .
\end{theorem}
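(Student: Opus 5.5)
The idea is to express the gradient at $K_i$ through the linear operator that defines the policy-improvement step. For the current iterate, define
\[
\mathcal H_i(K) := \frac{2}{M}\sum_{j=1}^M R_{j,i} K \Sigma_{j,i},
\qquad
G_i := \frac{2}{M}\sum_{j=1}^M \Gamma_{j,i},
\]
with $R_{j,i}$ and $\Gamma_{j,i}$ as in \eqref{eq_Generalupdate}. By \eqref{eq_lqr_grad_update}, the gradient at $K_i$ is $\nabla J_{SA}(K_i)=\mathcal H_i(K_i)+G_i$. By the defining equation of the policy-improvement step, $\mathcal H_i(\widehat K_{i+1})+G_i=0$. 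Subtracting the two identities gives
\[
\nabla J_{SA}(K_i) = \mathcal H_i(K_i)-\mathcal H_i(\widehat K_{i+1}) = -\mathcal H_i(\Delta_i).
\]
So the inner product we need is
\[
\langle \nabla J_{SA}(K_i),\Delta_i\rangle = -\frac{2}{M}\sum_{j=1}^M \Tr\!\big(\Delta_i^\top R_{j,i}\Delta_i\Sigma_{j,i}\big).
\]

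The next step is to bound each trace from below. Since $R_{j,i}\succeq R\succ0$ (because $P_{j,i}\succeq0$) and $\Sigma_{j,i}\succeq I$ (from the Lyapunov equation), we have
\[
\Tr(\Delta^\top R_{j,i}\Delta\Sigma_{j,i})=\Tr(\Sigma_{j,i}^{1/2}\Delta^\top R_{j,i}\Delta\Sigma_{j,i}^{1/2})\ge \lambda_{\min}(R)\,\Tr(\Sigma_{j,i}^{1/2}\Delta^\top\Delta\Sigma_{j,i}^{1/2})\ge\lambda_{\min}(R)\lambda_{\min}(\Sigma_{j,i})\|\Delta\|_F^2 .
\]
Using $\lambda_{\min}(\Sigma_{j,i})\ge1$, each term is at least $\lambda_{\min}(R)\|\Delta_i\|_F^2$. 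Averaging over $j$ then yields the claim with $c_d=2\lambda_{\min}(R)>0$. In vectorized form, this is the statement that $\sum_j\Sigma_{j,i}\otimes R_{j,i}\succeq M\lambda_{\min}(R)\,I$.

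The main point to verify carefully is the identity $\nabla J_{SA}(K_i)=-\mathcal H_i(\Delta_i)$. This requires checking that the gradient formula \eqref{eq_lqr_grad_update} evaluated at $K_i$ uses exactly the same frozen $(P_{j,i},\Sigma_{j,i})$ as the improvement equation, which holds by construction in Algorithm~\ref{Alg_PI_DRLQR}. The lower bounds $\Sigma_{j,i}\succeq I$ and $P_{j,i}\succeq 0$ follow from the series representations of the Lyapunov solutions under joint stability of $K_i$. The resulting constant $c_d=2\lambda_{\min}(R)$ is uniform in $i$, which will be convenient for the later convergence results. The hypothesis $\Delta_i\neq0$ is needed only to make the inequality strict.
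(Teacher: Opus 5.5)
Your proof is correct and follows the paper's argument essentially step for step. You use the policy-improvement equation to rewrite the gradient as $-\frac{2}{M}\sum_j R_{j,i}\Delta_i\Sigma_{j,i}$, then bound each trace term via $R_{j,i}\succeq R$ and $\Sigma_{j,i}\succeq I$ to get $c_d=2\lambda_{\min}(R)$, with a slightly more explicit justification of the trace inequality (via $\Sigma_{j,i}^{1/2}$) than the paper gives.
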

\begin{proof}[\textbf{Proof}]
From the gradient expression of the sample-average objective,
\[
\nabla J_{SA}(K_i) = \frac{2}{M}\sum_{j=1}^M
\bigl(R_{j,i}K_i+B_j^\top P_{j,i}A_j\bigr)\Sigma_{j,i},
\]
where $R_{j,i}:=R+B_j^\top P_{j,i}B_j$. Moreover, the policy-improvement equation
$\nabla_K J_{SA}(\widehat K_{i+1}\mid\{P_{j,i},\Sigma_{j,i}\}_{j=1}^M)=0$
implies
\[
\sum_{j=1}^M \bigl(R_{j,i}\widehat K_{i+1}+B_j^\top P_{j,i}A_j\bigr)\Sigma_{j,i}=0,
\]
that is,
\[
\sum_{j=1}^M B_j^\top P_{j,i}A_j\Sigma_{j,i} = -\sum_{j=1}^M R_{j,i}\widehat K_{i+1}\Sigma_{j,i}.
\]
Substituting this identity into the gradient expression gives
\begin{equation}
\label{eq_gradRep_app}
\nabla J_{SA}(K_i) = -\frac{2}{M}\sum_{j=1}^M R_{j,i}\Delta_i\Sigma_{j,i},
\end{equation}
where \(\Delta_i:=\widehat K_{i+1}-K_i\).

Taking the Frobenius inner product with \(\Delta_i\), we obtain
\[
\langle \nabla J_{SA}(K_i),\Delta_i\rangle = -\frac{2}{M}\sum_{j=1}^M
\Tr\!\bigl(\Delta_i^\top R_{j,i}\Delta_i\Sigma_{j,i}\bigr).
\]

Since \(R\succ0\) and \(P_{j,i}\succeq0\), we have
\[
R_{j,i}=R+B_j^\top P_{j,i}B_j \succeq R \succeq \underline{r} I,
\qquad \underline{r}:=\lambda_{\min}(R)>0.
\]
Also, since \(\Sigma_{j,i}\succeq I\), it follows that for every \(j\),
\[
\Tr\!\bigl(\Delta_i^\top R_{j,i}\Delta_i\Sigma_{j,i}\bigr)
\ge \underline{r}\,\Tr(\Delta_i^\top\Delta_i) = \underline{r}\,\|\Delta_i\|_F^2.
\]
Therefore,
\[
\langle \nabla J_{SA}(K_i),\Delta_i\rangle \le -\frac{2}{M}\sum_{j=1}^M \underline{r}\,\|\Delta_i\|_F^2 = -2\underline{r}\,\|\Delta_i\|_F^2.
\]
Thus the claim holds with $c_d:=2\underline{r} =2\lambda_{\min}(R)$.
\end{proof}
Theorem~\ref{thm_descent_main} shows that the update direction $\Delta_i$ yields a uniform descent direction of the sample-average objective at every iteration. In particular, the bound
\[
\langle \nabla J_{SA}(K_i),\Delta_i\rangle \le -c_d \|\Delta_i\|_F^2
\]
implies that $\Delta_i$ is a strict descent direction whenever $\Delta_i\neq 0$. This property is fundamental for the convergence analysis, as it ensures that the policy-improvement step consistently reduces the objective. The next theorem shows that this descent direction translates into an actual decrease of the objective under a suitable step size.

\begin{theorem}[Monotone Decrease]
\label{thm_Decrease_main} Under Assumptions~\ref{assump_stabilizable}-\ref{assump_smooth} and Lemma~\ref{lem_uniform_margin}, if $\Delta_i\neq0$, and $0<\alpha\leq\min\{\bar{\alpha}, c_d/L\}$, then $J_{SA}(K_i+\alpha \Delta_i) < J_{SA}(K_i)$.
\end{theorem}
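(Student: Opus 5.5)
We combine the descent inequality of Theorem~\ref{thm_descent_main} with the standard descent lemma for $L$-smooth functions. Lemma~\ref{lem_uniform_margin} guarantees that the whole segment stays in the stabilizing region for $\alpha\le\bar\alpha$, so $J_{SA}$ is finite and differentiable along it. Smoothness then bounds the change in $J_{SA}$ by a quadratic in $\alpha$.

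\textbf{Step 1: the segment is admissible.} By Lemma~\ref{lem_uniform_margin}, for every $t\in[0,\alpha]\subseteq[0,\bar\alpha]$ the gain $K_i+t\Delta_i$ lies in $\mathcal K_{js}$. Each $J(\cdot;A_j,B_j)$ is real-analytic on its stabilizing set, so $\phi(t):=J_{SA}(K_i+t\Delta_i)$ is continuously differentiable on $[0,\alpha]$ with
\[
\phi'(t)=\langle\nabla J_{SA}(K_i+t\Delta_i),\Delta_i\rangle .
\]

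\textbf{Step 2: quadratic upper bound.} Using the $L$-Lipschitz continuity of $\nabla J_{SA}$ from Assumption~\ref{assump_smooth}, write
\[
\phi(\alpha)-\phi(0)=\int_0^\alpha\phi'(t)\,dt
\le \alpha\langle\nabla J_{SA}(K_i),\Delta_i\rangle+\frac{L\alpha^2}{2}\|\Delta_i\|_F^2 .
\]

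\textbf{Step 3: insert the descent inequality.} Theorem~\ref{thm_descent_main} gives
\[
J_{SA}(K_i+\alpha\Delta_i)-J_{SA}(K_i)\le -\alpha\Bigl(c_d-\frac{L\alpha}{2}\Bigr)\|\Delta_i\|_F^2 .
\]
With $\alpha\le c_d/L$ we have $c_d-L\alpha/2\ge c_d/2>0$, so the right-hand side is at most $-\tfrac{\alpha c_d}{2}\|\Delta_i\|_F^2$. This is strictly negative because $\alpha>0$ and $\Delta_i\neq0$. The constant $c_d/2$ also gives a sufficient-decrease bound that can be reused in the later convergence analysis.

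\textbf{Main obstacle.} The difficulty is justifying the $L$-smoothness bound along the segment. Assumption~\ref{assump_smooth} only asserts $L$-smoothness on the sublevel set $\mathcal K_{js,\gamma}$, while Lemma~\ref{lem_uniform_margin} only places the segment in $\mathcal K_{js}$, so the points $K_i+t\Delta_i$ are not yet known to lie in $\mathcal K_{js,\gamma}$. I would handle this with a continuity argument. Let $t^\ast$ be the supremum of the $s\in[0,\alpha]$ such that $K_i+t\Delta_i\in\mathcal K_{js,\gamma}$ for all $t\in[0,s]$. On $[0,t^\ast]$ the bound of Step 3, applied with $s$ in place of $\alpha$, gives $\phi(s)<\phi(0)\le\gamma$ for every $s\in(0,t^\ast]$. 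Since $\phi$ is continuous and the segment stays in the open set $\mathcal K_{js}$, strict inequality lets the curve remain in the sublevel set a bit longer, so $t^\ast$ cannot be smaller than $\alpha$ and hence $t^\ast=\alpha$. If this bootstrapping proves delicate, the fallback is to read Assumption~\ref{assump_smooth} as smoothness on a neighborhood of $\mathcal K_{js,\gamma}$ containing all the segments, which compactness together with Lemma~\ref{lem_uniform_margin} makes available.
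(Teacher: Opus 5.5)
Your proposal is correct and follows the paper's proof: the $L$-smoothness descent inequality combined with Theorem~\ref{thm_descent_main} gives $J_{SA}(K_i+\alpha\Delta_i)-J_{SA}(K_i)\le-\alpha(c_d-L\alpha/2)\|\Delta_i\|_F^2<0$. Your continuity argument fills a gap the paper leaves open, since the paper applies smoothness along the whole segment while Lemma~\ref{lem_uniform_margin} only places that segment in $\mathcal K_{js}$, not in $\mathcal K_{js,\gamma}$; the one case your argument skips, $t^\ast=0$ with $J_{SA}(K_i)=\gamma$, follows from $\phi'(0)\le-c_d\|\Delta_i\|_F^2<0$.
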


\begin{proof}[\textbf{Proof}]
Since $J_{SA}$ is $L$-smooth on $\mathcal K_{js,\gamma}$, for any admissible stable step size $\alpha\in(0,\bar{\alpha}]$ we have
\[
J_{SA}(K_i+\alpha\Delta_i) \le J_{SA}(K_i) + \alpha\langle \nabla J_{SA}(K_i),\Delta_i\rangle + \frac{L}{2}\alpha^2\|\Delta_i\|_F^2.
\]
By Theorem~\ref{thm_descent_main},
$\langle \nabla J_{SA}(K_i),\Delta_i\rangle
\le
- c_d \|\Delta_i\|_F^2$.
Substituting gives
\begin{equation} \label{eq_L_smooth}
    J_{SA}(K_i+\alpha\Delta_i) \le J_{SA}(K_i) + \|\Delta_i\|_F^2 \left( -\alpha c_d+\frac{L}{2}\alpha^2 \right).
\end{equation}
The term in parentheses is negative for all $0<\alpha<2c_d/L$, hence descent holds. 
Moreover, this quadratic is minimized at $\alpha=c_d/L$, which gives the maximum decrease (provided $\alpha\le\bar{\alpha}$ to preserve stability). 
Restricting $\alpha$ to the interval
$0<\alpha\leq\min\{\bar{\alpha}, c_d/L\}$
ensures both stability and strict decrease, completing the proof.
\end{proof}

Together with Theorem~\ref{thm_stability_main}, this result implies that the algorithm generates a sequence of stabilizing controllers with strictly decreasing objective values. %In practice, an appropriate step-size can be achieved using backtracking line search.

%%%%%%%%%%%%%%%%%%%%%%%%%%%%%%%%%%%%%%%%%%%%%%%%%%%%%%%%%%%%
\subsection{Subsequential Convergence}
We now combine the stability preservation and descent properties to establish convergence of the iterates. Intuitively, stability ensures that the iterates remain in a well-defined region, while monotonic decrease prevents oscillations and forces the sequence toward stationary points.

\begin{lemma}[Sufficient decrease]
\label{lem:sufficient_decrease_app}
Let $\alpha=\min\{\bar{\alpha}, c_d/L\}$, and $K_{i+1}=K_i+\alpha\Delta_i$.
Under the assumptions of Theorem~\ref{thm_Decrease_main}, there exists a constant $c_1>0$ such that
\[ 
J_{SA}(K_i)-J_{SA}(K_{i+1}) \ge c_1\|\Delta_i\|_F^2.
\]
\end{lemma}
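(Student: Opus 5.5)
The plan is to reuse the $L$-smoothness inequality \eqref{eq_L_smooth} from the proof of Theorem~\ref{thm_Decrease_main}, now evaluated at the specific step size $\alpha=\min\{\bar\alpha,c_d/L\}$.

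\textbf{Step 1: the quadratic bound.} By Lemma~\ref{lem_uniform_margin}, $\alpha\le\bar\alpha$ keeps $K_i+\alpha\Delta_i$ jointly stabilizing. Assumption~\ref{assump_smooth} then lets us write
\[
J_{SA}(K_i)-J_{SA}(K_{i+1})\ \ge\ \Bigl(\alpha c_d-\tfrac{L}{2}\alpha^2\Bigr)\|\Delta_i\|_F^2
=\alpha\Bigl(c_d-\tfrac{L}{2}\alpha\Bigr)\|\Delta_i\|_F^2 .
\]
This uses Theorem~\ref{thm_descent_main} for the inner-product term.

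\textbf{Step 2: a lower bound on the coefficient.} Since $\alpha\le c_d/L$, we have $c_d-\tfrac{L}{2}\alpha\ge c_d/2$. The coefficient is therefore at least $\alpha c_d/2$. Setting
\[
c_1:=\tfrac{c_d}{2}\min\{\bar\alpha,c_d/L\}>0
\]
gives the claim. Positivity follows because $c_d=2\lambda_{\min}(R)>0$, $L<\infty$, and $\bar\alpha>0$. The constant is uniform in $i$, since none of $c_d$, $L$, $\bar\alpha$ depends on the iteration. The case $\Delta_i=0$ is trivial, because then both sides vanish.

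\textbf{Main obstacle: keeping the smoothness inequality valid along the whole step.} The descent lemma needs the segment $\{K_i+t\Delta_i:t\in[0,\alpha]\}$ to lie in the region where $L$-smoothness holds, namely $\mathcal K_{js,\gamma}$, not merely $\mathcal K_{js}$. I would handle this by induction on $i$, with the hypothesis $J_{SA}(K_i)\le\gamma$.

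Let $t^\ast$ be the first time the segment leaves the sublevel set. Lemma~\ref{lem_uniform_margin} gives joint stability for every $t\le\bar\alpha$, and $J_{SA}$ is continuous there, so such a $t^\ast$ would satisfy $J_{SA}(K_i+t^\ast\Delta_i)=\gamma$. On $[0,t^\ast]$ the smoothness bound applies, and together with Theorem~\ref{thm_descent_main} it yields $J_{SA}(K_i+t^\ast\Delta_i)<J_{SA}(K_i)\le\gamma$ whenever $\Delta_i\neq0$. This is a contradiction.

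Hence the whole segment stays in $\mathcal K_{js,\gamma}$, the inequality of Step 1 is justified, and $K_{i+1}\in\mathcal K_{js,\gamma}$, which closes the induction. I expect this continuity argument to be the only delicate point; the remaining steps are algebra.
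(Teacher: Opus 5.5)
Your proposal is correct and follows the paper's route: the paper simply invokes \eqref{eq_L_smooth} at $\alpha=\min\{\bar\alpha,c_d/L\}$ and takes $c_1=\alpha(c_d-\tfrac{L}{2}\alpha)$, and your $c_1=\tfrac{c_d}{2}\alpha$ is a valid lower bound for it. Your first-exit argument, which keeps the segment $\{K_i+t\Delta_i\}$ inside $\mathcal K_{js,\gamma}$ where $L$-smoothness is assumed, is a justification the paper leaves implicit; it has one small gap, since when $J_{SA}(K_i)=\gamma$ you must separately show $t^\ast>0$, which follows from the strictly negative directional derivative $\langle\nabla J_{SA}(K_i),\Delta_i\rangle\le -c_d\|\Delta_i\|_F^2$.
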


\begin{proof}[\textbf{Proof}]
From \eqref{eq_L_smooth} we have
\[
J_{SA}(K_{i+1})-J_{SA}(K_i)
\le
-\alpha\left(c_d-\frac{L}{2}\alpha\right)\|\Delta_i\|_F^2.
\]
Thus the claim holds with $c_1:=\alpha\left(c_d-\frac{L}{2}\alpha\right)>0$.
\end{proof}

\begin{theorem}[Subsequential Convergence]
\label{thm_subseq_main} Let the Assumptions~\ref{assump_stabilizable}-\ref{assump_smooth} and Lemma~\ref{lem_uniform_margin} hold, $\alpha=\min\{\bar{\alpha}, c_d/L\}$, $K_0\in\mathcal K_{js,\gamma}$, and $\{K_i\}$ be generated by the proposed algorithm. Then
\begin{enumerate}
\item The sequence $\{K_i\}$ remains in $\mathcal K_{js,\gamma}$ and therefore has accumulation points.

\item The sequence $\{J_{SA}(K_i)\}$ is monotone nonincreasing and convergent.

\item $\|\Delta_i\|_F\to0$ as $i\to\infty$.

\item $\|\nabla J_{SA}(K_i)\|_F\to0$ as $i\to\infty$.

\item Every accumulation point of $\{K_i\}$ is a stationary point of $J_{SA}$.
\end{enumerate}
\end{theorem}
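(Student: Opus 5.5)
The proof goes by induction on $i$, combining Theorem~\ref{thm_Decrease_main}, Lemma~\ref{lem:sufficient_decrease_app}, and a telescoping sum.

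\textbf{Item 1.} Suppose $K_i\in\mathcal K_{js,\gamma}$. Lemma~\ref{lem_uniform_margin} gives $K_i+\alpha\Delta_i\in\mathcal K_{js}$, since $\alpha\le\bar\alpha$. Theorem~\ref{thm_Decrease_main} gives $J_{SA}(K_{i+1})\le J_{SA}(K_i)\le\gamma$; if $\Delta_i=0$, then $K_{i+1}=K_i$ and this is trivial. Hence $K_{i+1}\in\mathcal K_{js,\gamma}$. Starting from $K_0\in\mathcal K_{js,\gamma}$, every iterate stays in this set, which is compact by Assumption~\ref{assump_smooth}. Bolzano--Weierstrass then gives accumulation points, and they lie in $\mathcal K_{js,\gamma}$.

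There is a subtle point here. The $L$-smoothness inequality is used along the whole segment $[K_i,K_{i+1}]$, so the segment must stay inside $\mathcal K_{js,\gamma}$ or inside a region where the descent lemma holds. I would handle this by a continuity argument. Suppose some point on the segment had cost exceeding $\gamma$, and take the first such exit point. Applying the smoothness bound \eqref{eq_L_smooth} up to that point, which is still a step size $\le c_d/L$, gives a value at most $J_{SA}(K_i)\le\gamma$, a contradiction. This is the step I expect to need care. Alternatively, one reads Assumption~\ref{assump_smooth} as granting the quadratic upper bound for admissible steps, as Theorem~\ref{thm_Decrease_main} already does.

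\textbf{Item 2.} Monotonicity follows from Lemma~\ref{lem:sufficient_decrease_app}. The cost is bounded below by $0$, since $Q\succeq0$, $R\succ0$ and $\Sigma_j\succ0$, so the monotone sequence converges to some $J_\infty\ge0$.

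\textbf{Item 3.} Summing the sufficient-decrease inequality over $i=0,\dots,N$ gives
\[
c_1\sum_{i=0}^{N}\|\Delta_i\|_F^2\le J_{SA}(K_0)-J_{SA}(K_{N+1})\le J_{SA}(K_0).
\]
The series is therefore summable, and $\|\Delta_i\|_F\to0$.

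\textbf{Item 4.} Use the representation \eqref{eq_gradRep_app}, $\nabla J_{SA}(K_i)=-\tfrac{2}{M}\sum_j R_{j,i}\Delta_i\Sigma_{j,i}$, which gives
\[
\|\nabla J_{SA}(K_i)\|_F\le \tfrac{2}{M}\sum_j\|R_{j,i}\|\,\|\Sigma_{j,i}\|\,\|\Delta_i\|_F .
\]
The norms $\|P_{j,i}\|$ and $\|\Sigma_{j,i}\|$ are bounded uniformly in $i$. One way to see this: the solutions of the Lyapunov equations are continuous on the open set $\mathcal K_{js}$, hence bounded on the compact set $\mathcal K_{js,\gamma}$. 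More directly, $\Tr(P_{j,i})\le \Tr(P_{j,i}\Sigma_{j,i})=J(K_i;A_j,B_j)\le M\gamma$ because $\Sigma_{j,i}\succeq I$; this bounds $P_{j,i}$, and $\Sigma_{j,i}$ is bounded similarly using $Q$ or via compactness. Hence $\|R_{j,i}\|$ is uniformly bounded, and item 3 gives $\nabla J_{SA}(K_i)\to0$.

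\textbf{Item 5.} Let $K_{i_k}\to\bar K\in\mathcal K_{js,\gamma}$. By Assumption~\ref{assump_smooth} the gradient $\nabla J_{SA}$ is continuous, so $\nabla J_{SA}(\bar K)=\lim_k\nabla J_{SA}(K_{i_k})=0$. Thus $\bar K$ is a stationary point.
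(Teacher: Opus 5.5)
Your proposal is correct and follows essentially the same route as the paper: sufficient decrease keeps the iterates in the compact set $\mathcal K_{js,\gamma}$, telescoping gives $\sum_i\|\Delta_i\|_F^2<\infty$, the representation \eqref{eq_gradRep_app} with compactness bounds on $P_{j,i},\Sigma_{j,i}$ gives $\nabla J_{SA}(K_i)\to0$, and continuity of the gradient handles accumulation points. Your worry about the segment $[K_i,K_{i+1}]$ is a point the paper silently skips; the first-exit argument closes it if you use the strict decrease available when $\Delta_i\neq0$, and the negative directional derivative from Theorem~\ref{thm_descent_main} rules out an exit at the starting point. One small slip: $\Tr(P_{j,i}\Sigma_{j,i})$ is not $J(K_i;A_j,B_j)$; rather $J(K_i;A_j,B_j)=\Tr\bigl((Q+K_i^\top RK_i)\Sigma_{j,i}\bigr)=\Tr(P_{j,i})$, so your bound $\Tr(P_{j,i})\le M\gamma$ holds directly.
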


\begin{proof}[\textbf{Proof}]
By Lemma~\ref{lem:sufficient_decrease_app}, there exists $c_1>0$ such that
\[
J_{SA}(K_i)-J_{SA}(K_{i+1}) \ge c_1\|\Delta_i\|_F^2, \qquad \forall i\ge0.
\]
In particular, $J_{SA}(K_{i+1})\le J_{SA}(K_i), \quad \forall i\ge0$,
so the sequence $\{J_{SA}(K_i)\}$ is monotone nonincreasing.

Since $K_0\in\mathcal K_{js,\gamma}$, we have $J_{SA}(K_0)\le\gamma$. By monotonicity,
\[
J_{SA}(K_i)\le J_{SA}(K_0)\le\gamma, \qquad \forall i\ge0.
\]
Hence $K_i\in\mathcal K_{js,\gamma}$, $\forall i\ge0$. Therefore all iterates remain in the compact set $\mathcal K_{js,\gamma}$, and thus the sequence $\{K_i\}$ is bounded and admits accumulation points. This proves the first claim.

Because $J_{SA}$ is continuous on the compact set $\mathcal K_{js,\gamma}$, it is bounded below there. Since $\{J_{SA}(K_i)\}$ is monotone nonincreasing and bounded below, it converges to a finite limit, say
\[
\bar J := \lim_{i\to\infty} J_{SA}(K_i).
\]
This proves the second claim.

Next, summing the sufficient decrease inequality from $i=0$ to $T$ yields
\begin{align*}
c_1\sum_{i=0}^{T}\|\Delta_i\|_F^2
& \le
\sum_{i=0}^{T}\big(J_{SA}(K_i)-J_{SA}(K_{i+1})\big)
\\ &=
J_{SA}(K_0)-J_{SA}(K_{T+1}).
\end{align*}
Since $J_{SA}$ is bounded below on $\mathcal K_{js,\gamma}$, the right-hand side is
uniformly bounded in $T$. Hence
\[
\sum_{i=0}^{\infty}\|\Delta_i\|_F^2<\infty,
\]
which implies $\|\Delta_i\|_F\to0$ as $i\to\infty$. This proves the third claim.

We next show that the gradients vanish.  From \eqref{eq_gradRep_app}, the triangle inequality, and submultiplicativity,
\[
\|\nabla J_{SA}(K_i)\|_F \le
\frac{2}{M}\sum_{j=1}^M \|R_{j,i}\|\,\|\Delta_i\|_F\,\|\Sigma_{j,i}\|,
\]
where \(R_{j,i}:=R+B_j^\top P_{j,i}B_j\). For each fixed \(j\), the matrices
\(P_{j,i}\) and \(\Sigma_{j,i}\) are the unique solutions of the Lyapunov equations
\begin{align*}
    P_{j,i}&=Q+K_i^\top RK_i+(A_j+B_jK_i)^\top P_{j,i}(A_j+B_jK_i),\\
    \Sigma_{j,i}&=I+(A_j+B_jK_i)\Sigma_{j,i}(A_j+B_jK_i)^\top,
\end{align*}
and hence depend continuously on \(K_i\) on the jointly stabilizing set. Since
\(\mathcal K_{js,\gamma}\) is compact, the continuous maps \(K\mapsto \|P_j(K)\|\) and \(K\mapsto \|\Sigma_j(K)\|\) attain their maxima on \(\mathcal K_{js,\gamma}\). Because \(j\in[M]\) is finite, there exist constants \(\bar p,\bar \sigma>0\) such that
\[
\|P_{j,i}\|\le \bar p,\qquad \|\Sigma_{j,i}\|\le \bar \sigma,
\quad
\forall K_i\in\mathcal K_{js,\gamma},\ \forall j\in[M].
\]
Therefore,
\[
\|R_{j,i}\|
=
\|R+B_j^\top P_{j,i}B_j\|
\le
\|R\|+\|B_j\|^2\|P_{j,i}\|
\le
\bar r,
\]
for some constant \(\bar r>0\) independent of \(i\) and \(j\). Consequently,
\begin{equation}\label{eq_gradupperbound}
    \|\nabla J_{SA}(K_i)\|_F
    \le
    2\bar r\,\bar{\sigma}\,\|\Delta_i\|_F.
\end{equation}
Since \(\|\Delta_i\|_F\to0\) as \(i\to\infty\), it follows that \(\|\nabla J_{SA}(K_i)\|_F\to0\). This proves the fourth claim.

Finally, let $\bar K$ be any accumulation point of $\{K_i\}$.
Then there exists a subsequence $\{K_{i_\ell}\}$ such that $K_{i_\ell}\to\bar K$. From the fourth claim, $\|\nabla J_{SA}(K_{i_\ell})\|_F\to0$. Because $J_{SA}$ is continuously differentiable on $\mathcal K_{js,\gamma}$,
its gradient is continuous. Therefore, \[\nabla J_{SA}(\bar K)=\lim_{\ell\to\infty}\nabla J_{SA}(K_{i_\ell})=0.\]
Thus every accumulation point of $\{K_i\}$ is a stationary point of $J_{SA}$. This proves the fifth claim.
\end{proof}

%%%%%%%%%%%%%%%%%%%%%%%%%%%%%%%%%%%%%%%%%%%%%%%%%%%%%%%%%%%%
\subsection{Linear Convergence}

The previous result ensures convergence to stationary points. We now strengthen this result by showing that, under an additional gradient-dominance condition, the convergence is in fact global and occurs at a linear rate.

\begin{theorem}[Linear Convergence]
\label{thm_linear_main}
Under the assumptions of Theorem~\ref{thm_subseq_main} and Assumption~\ref{assump_GD_main},  
there exists $\rho\in(0,1)$ such that for any $K_0\in\mathcal K_{js,\gamma}$ and the minimizer $K_\star\in\mathcal K_{js,\gamma}$, 
\[
J_{SA}(K_i)-J_{SA}(K_\star)
\le
\rho^i\big(J_{SA}(K_0)-J_{SA}(K_\star)\big).
\]
\end{theorem}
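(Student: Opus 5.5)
The plan is to combine the sufficient decrease of Lemma~\ref{lem:sufficient_decrease_app} with the gradient-direction bound \eqref{eq_gradupperbound} and the gradient-dominance inequality of Assumption~\ref{assump_GD_main}. This turns the per-step decrease into a fixed fraction of the current optimality gap.

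First, by Theorem~\ref{thm_subseq_main} all iterates remain in $\mathcal K_{js,\gamma}$. Hence the constants $\bar r,\bar\sigma$ from the proof of that theorem apply uniformly in $i$, and Assumption~\ref{assump_GD_main} can be invoked at every $K_i$. From \eqref{eq_gradupperbound} we have
\[
\|\Delta_i\|_F^2 \ge \frac{1}{4\bar r^2\bar\sigma^2}\|\nabla J_{SA}(K_i)\|_F^2 .
\]
Plugging this into Lemma~\ref{lem:sufficient_decrease_app} with the fixed step $\alpha=\min\{\bar\alpha,c_d/L\}$ gives
\[
J_{SA}(K_i)-J_{SA}(K_{i+1}) \ge \frac{c_1}{4\bar r^2\bar\sigma^2}\|\nabla J_{SA}(K_i)\|_F^2 .
\]
Gradient dominance then bounds the right-hand side below by
\[
\frac{2\mu c_1}{4\bar r^2\bar\sigma^2}\bigl(J_{SA}(K_i)-J_{SA}(K_\star)\bigr) =: \kappa\,\bigl(J_{SA}(K_i)-J_{SA}(K_\star)\bigr).
\]

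Next, write $\delta_i:=J_{SA}(K_i)-J_{SA}(K_\star)$ and subtract $J_{SA}(K_\star)$ on both sides. This yields $\delta_{i+1}\le(1-\kappa)\delta_i$, and induction gives the claim with $\rho:=1-\kappa$. If $\Delta_i=0$ at some step, \eqref{eq_gradRep_app} gives $\nabla J_{SA}(K_i)=0$, so $\delta_i=0$ by gradient dominance and the bound holds trivially from then on. Note that $\rho$ depends only on $c_1,\bar r,\bar\sigma,\mu$, all of which are uniform over $\mathcal K_{js,\gamma}$, so it does not depend on $K_0$.

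The main obstacle is showing $\rho\in(0,1)$, that is, $0<\kappa<1$. Positivity of $\kappa$ is immediate. For $\kappa<1$, I would first note that $\delta_{i+1}\ge0$ forces $\kappa\delta_i\le\delta_i$, so $\kappa\le1$ whenever some $\delta_i>0$. To get a strict inequality and $\rho>0$, I would replace $\kappa$ by $\min\{\kappa,1/2\}$; the contraction $\delta_{i+1}\le(1-\kappa)\delta_i$ remains valid with this smaller constant.

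A secondary check is the use of $L$-smoothness in Lemma~\ref{lem:sufficient_decrease_app}. It requires the segment $[K_i,K_{i+1}]$ to lie where the descent lemma applies. This is ensured by Lemma~\ref{lem_uniform_margin} and the same argument as in Theorem~\ref{thm_Decrease_main}, which I take as given.
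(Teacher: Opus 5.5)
Your proposal is correct and follows essentially the same route as the paper. You chain sufficient decrease with \eqref{eq_gradupperbound} and gradient dominance to get a contraction with $\kappa=\theta=c_1\mu/(2\bar r^2\bar\sigma^2)$, then apply the same safeguard, since $1-\min\{\kappa,1/2\}=\max\{1-\theta,1/2\}$ is exactly the paper's $\rho$; your side remarks on the case $\Delta_i=0$ and on the segment $[K_i,K_{i+1}]$ lying where $L$-smoothness applies are harmless additions the paper leaves implicit.
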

\begin{proof}[\textbf{Proof}]
From Lemma~\ref{lem:sufficient_decrease_app}, there exists $c_1>0$ such that
\begin{equation}
\label{eq_suff_dec_again_app}
J_{SA}(K_i)-J_{SA}(K_{i+1})
\ge
c_1\|\Delta_i\|_F^2.
\end{equation}
From \eqref{eq_gradupperbound}, we have the lower-bound of $\|\Delta_i\|_F$ as
\begin{equation}
\label{eq_Delta_lower_by_grad_app}
\|\Delta_i\|_F^2
\ge
\frac{1}{4\bar{r}^2\bar{\sigma}^2}
\|\nabla J_{SA}(K_i)\|_F^2.
\end{equation}
Substituting \eqref{eq_Delta_lower_by_grad_app} into
\eqref{eq_suff_dec_again_app} yields
\[
J_{SA}(K_i)-J_{SA}(K_{i+1})
\ge
\frac{c_1}{4\bar{r}^2\bar{\sigma}^2}
\|\nabla J_{SA}(K_i)\|_F^2.
\]
From Assumption \ref{assump_GD_main},
\[
\|\nabla J_{SA}(K_i)\|_F^2
\ge
2\mu\big(J_{SA}(K_i)-J_{SA}(K_\star)\big),
\]
which results in  
\[
J_{SA}(K_i)-J_{SA}(K_{i+1})
\ge
\frac{c_1\mu}{2\bar{r}^2\bar{\sigma}^2}
\big(J_{SA}(K_i)-J_{SA}(K_\star)\big).
\]
% Define
% \[
% \theta:=
% \frac{c_1\mu}{2\bar{r}^2\bar{\sigma}^2}>0.
% \]
% Then
% \[
% J_{SA}(K_{i+1})-J_{SA}(K_\star)
% \le
% (1-\theta)\big(J_{SA}(K_i)-J_{SA}(K_\star)\big).
% \]
% Since the left-hand side is nonnegative, necessarily $0<\theta\le1$. Set $\rho:=1-\theta\in[0,1)$. Iterating the contraction inequality yields
% \[
% J_{SA}(K_i)-J_{SA}(K_\star)
% \le
% \rho^i\big(J_{SA}(K_0)-J_{SA}(K_\star)\big),
% \qquad
% \forall i\ge0.
% \]
% This completes the proof.

Define
\[
\theta:=
\frac{c_1\mu}{2\bar r^2\bar\sigma^2}>0
\]
Set
\[
\rho:=\max\left\{1-\theta,\frac12\right\}\in(0,1).
\]
Since $J_{SA}(K_i)-J_{SA}(K_\star)\ge0$ and
$1-\theta\le\rho$, the preceding inequality implies
\[
J_{SA}(K_{i+1})-J_{SA}(K_\star)
\le
\rho\bigl(J_{SA}(K_i)-J_{SA}(K_\star)\bigr).
\]
Iterating yields
\[
J_{SA}(K_i)-J_{SA}(K_\star)
\le
\rho^i\bigl(J_{SA}(K_0)-J_{SA}(K_\star)\bigr),
\qquad \forall i\ge0.
\]
This completes the proof.
\end{proof}

This result shows that, beyond mere stationarity, the algorithm achieves a global geometric rate of convergence toward the optimal controller under gradient dominance.

%================================================================
%================================================================
%================================================================

\section{Numerical Analysis} \label{sec_experiment}

In this section, we numerically study the convergence behavior of the proposed policy iteration method for domain-randomized LQR and compare it with Policy Gradient descent \cite{fujinami2025pg} and a semidefinite programming (SDP)-based approach (SDPD) \cite{AbbasECC2026}.

To illustrate the behavior of PI, we consider a two-dimensional inverted pendulum linearized and discretized around the upright equilibrium. The system dynamics are given by
\begin{equation}
    A=\begin{bmatrix}
        1 & \Delta t \\
        \frac{g}{l}\Delta t & 1
    \end{bmatrix}, \qquad
    B=\begin{bmatrix}
        0 \\
        \frac{1}{ml^2}\Delta t
    \end{bmatrix},
\end{equation}
where $\Delta t = 0.01$ and $g = 10$. The mass $m$ and pole length $l$ are uncertain parameters with nominal value $1$, each subject to independent uniform perturbations of $\pm 25\%$.

We approximate the sample-average objective $J_{SA}$ using $M=50$ sampled systems, and the step size are $\alpha=0.1$ and $0.001$ for PI and PG, respectively. The optimal controller $K_\star$ is computed via a dense grid search over the two-dimensional gain space.

Fig.~\ref{fig_trajectory} shows the evolution of the feedback gain starting from $K_0=[-120,\,-10]$. The contour lines correspond to level sets of $J_{SA}(K)$. PG follows directions orthogonal to the level sets, as expected. The SDP-based method exhibits similar behavior when using small perturbation sizes.

In contrast, the PI updates move toward the minimizer of a local quadratic approximation of $J_{SA}$, which can result in more direct progress toward $K_\star$. In this example, the update direction is well aligned with the optimal solution, leading to a nearly straight trajectory toward $K_\star$.
\begin{figure}[t]
    \centering
    \includegraphics[width=0.48\textwidth]{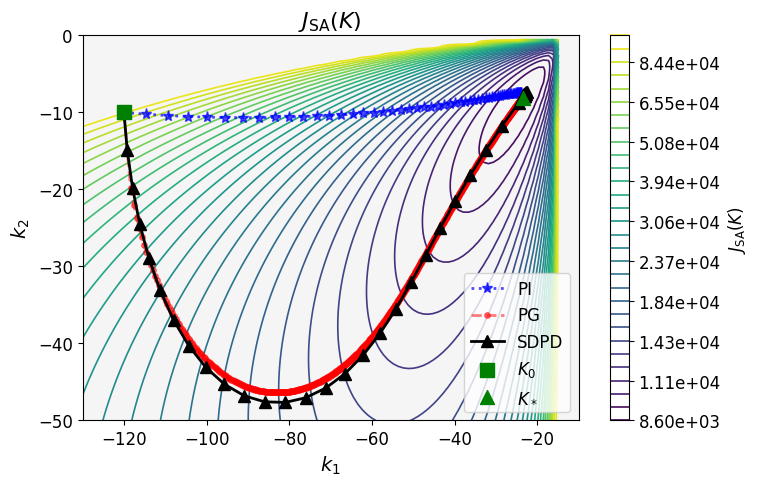}
    \caption{Trajectory of controller updates for PI, PG, and SDPD starting from $K_0=[-120,\,-10]$.}
    \label{fig_trajectory}
\end{figure}
This behavior is not universal and depends on the initial controller. Fig.~\ref{fig_trajectory2} shows the trajectories for initialization $K_0=[-16,\,-40]$. In early iterations, the PI updates follow descent directions that move the controller toward the interior of the stabilizing set rather than directly toward the optimum. As the iterates move away from the boundary, the update direction becomes better aligned with $K_\star$, resulting in accelerated convergence.
\begin{figure}[t]
    \centering
    \includegraphics[width=0.48\textwidth]{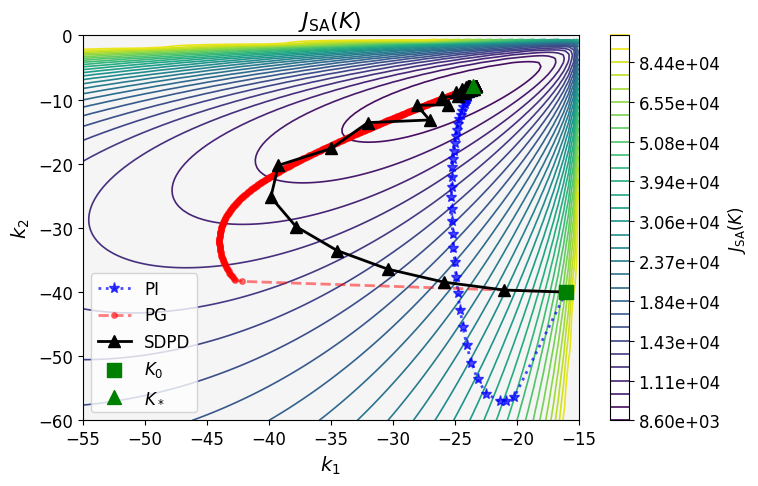}
    \caption{Trajectory of controller updates for PI, PG, and SDPD starting from $K_0=[-16,\,-40]$.}
    \label{fig_trajectory2}
\end{figure}

%================================================================
%================================================================
%================================================================

\section{Conclusion}\label{sec_conclusion}

We studied policy optimization under domain randomization for linear quadratic control to learn a single state-feedback controller that minimizes the average cost across uncertain systems. We demonstrated that our policy iteration algorithm preserves closed-loop stability and monotonically decreases the sample-average objective at each iteration. Furthermore, we established subsequential convergence to stationary points and, under a gradient-dominance condition, proved a global linear convergence rate.

This work provides a theoretical foundation for domain-randomized control, bridging classical LQR and learning-based methods. Future directions include extensions to stochastic or time-varying domains, data-driven settings with finite-sample guarantees, and nonlinear systems.

% Bibliography
\bibliographystyle{IEEEtran}
\bibliography{ref_CDC.bib}

\end{document}